\documentclass[a4paper,12pt]{article}
\usepackage[utf8]{inputenc}
\usepackage{xcolor}
\vspace{1mm}
\usepackage{amsmath, amssymb,longtable}
\usepackage{amsthm}
\usepackage{blindtext, multicol}
\usepackage{enumerate}
\usepackage{cite}
\usepackage{setspace}
\usepackage[bookmarksnumbered, colorlinks, plainpages]{hyperref}
\usepackage{geometry}
\DeclareMathOperator{\barone}{\bar{1}^{-1}}
\DeclareMathOperator{\mx}{\mathfrak{m}}
\DeclareMathOperator{\qd}{\mathcal{Q}}
\newtheorem{theorem}{Theorem}[subsection]
\newtheorem{lemma}[theorem]{Lemma}
\newtheorem{seclemma}{Lemma}[section]

\newenvironment{mtheorem}[1]{\renewcommand\thetheorem{#1}
  \theorem
}{\endtheorem}
\newtheorem{definition}[theorem]{Definition}
\newtheorem*{note}{Note}

\newtheorem{remark}[seclemma]{Remark}
\title{Surjective Stability of Dickson-Siegel-Eichler-Roy Elementary Orthogonal Group}
\author{Ambily Ambattu Asokan, Adriraj Talukdar}
\date{}
\begin{document}

\maketitle

\begin{abstract}
    Let $R$ be a commutative Noetherian ring in which $2$ is invertible. We prove that a conjugate of Petrov's odd elementary unitary group is contained in the DSER elementary orthogonal group defined over projective modules. We also show a sufficient condition regarding the Witt index of the quadratic module with a hyperbolic summand $\mathbb{H}(P)$ which implies the surjective stability of DSER orthogonal ${\rm K}_1$.
\end{abstract}
{{\it Keywords:} Quadratic modules, Odd unitary group, DSER elementary orthogonal group,  Stable range conditions, ${\rm K}_1$-group.}\\
{{\it 2020 Mathematics Subject Classification:} 19G05, 20G35, 19B14, 19B99}
\section{Introduction}
\par H. Bass introduced the notion of stable rank in the 1960s and studied the normality and stability of elementary subgroups of linear groups. Bass in \cite{Bass} and Vaserstein in \cite{VasLin} made important contribution in studying stability of the ${\rm K}_1$ functor for linear groups. In \cite{VasOS}, Vaserstein also demonstrated the ${\rm K}_1$-stability of even orthogonal and symplectic groups under stable range conditions.
\par In 1970s, A. Bak developed the notion of a parameterized stable range, namely $\Lambda$-stable rank (see \cite{Bak}), which is weaker than the stable rank of Bass. Following this, several stability results were established, notably by Bak and Tang in \cite{Herm} for Hermitian ${\rm K}_1$, by Bak, Petrov and Tang in \cite{Quad} and by Tang in \cite{Tang} for quadratic ${\rm K}_1$. All these results consider $\Lambda$-stable ranks.
\par In \cite{Petrov}, V. A. Petrov introduced the \textit{odd unitary groups}, which generalize and unify quadratic and Hermitian groups discussed in \cite{Quad} and \cite{Herm}, as well as other known classical groups. Petrov also proved the surjective stability of the \textit{odd unitary} ${\rm K}_1$ in \cite{Petrov}. The complete stability was proven by Weibo Yu in \cite{YuSt}. 
\par The \textit{DSER elementary orthogonal group} was introduced by Amit Roy in \cite{Roy} over quadratic spaces of the form $Q\!\perp \!\mathbb{H}(P)$. The normality of DSER elementary subgroup in ${\rm O}_R(Q\!\perp \!\mathbb{H}(P))$ was proven by A. A. Ambily and Ravi A. Rao in \cite{AAN}. Further  in \cite{AAK}, A. A. Ambily proved the ${\rm K}_1$-stability of this group when $Q$ and $P$ are free modules. 
\par In the free case, Petrov's odd elementary unitary group and DSER elementary orthogonal group are shown to be congruent to each other by Aparna Pradeep and Ambily A. A. in \cite{comp}. Additionally, one can see that the $\Lambda$-stability used in \cite{AAK}, \cite{Petrov}, and \cite{YuSt} are consistent with each other. 
\par We consider all rings to be associative with unit. We consider the quadratic space $M = Q\!\perp \! \mathbb{H}(P)$ consisting of a projective $R$-module $P$ and a quadratic $R$-space $(Q,q)$, both finitely generated and of constant rank over a commutative Noetherian ring $R$. We define Petrov's \textit{odd elementary hyperbolic unitary group} and \textit{DSER elementary orthogonal group}. Short of the equality in \cite{comp}, we demonstrate one-way inclusion of the two groups up to congruence in {\bf Theorem} \ref{A}. A Local-Global principle analogous to Quillen's technique is an important ingredient of this proof.
\par Finally, we show a sufficient condition regarding the Witt Index of module $M$ which implies the surjective stability of DSER odd orthogonal ${\rm K}_1$ in {\bf Theorem} \ref{B}. This result demonstrates an application of {\bf Theorem} \ref{A} in its proof by transferring certain properties from Petrov's group to DSER group. 
\section{Preliminaries}

\subsection{Stable Range}

\par Let $R$ be an associative ring with $1$. Let $P$ be a projective $R$-module and $P^*$ denote its dual ${\rm Hom}_R(P,R)$ be its dual. 
An element $u \in P$ is said to be \textit{unimodular}, if there exists a functional $\psi \in P^*$ so that $\psi(u) = 1$. 
The projective module $P$ contains a unimodular element if and only if $P$ splits off a free summand of rank one, i.e. $P = Q\oplus Ru$ for an $R$-module $Q$ and $u$ is a unimodular element. In particular, $(a_1, \ldots, a_{n})^t \in R^{n}$ is a unimodular element if there exist elements $b_1, \ldots, b_n \in R$ such that
$\Sigma_{1=i}^n a_i b_i = 1$.
\vspace{2mm}
 \par The \textit{stable rank condition} ${\rm SR}_n$ on $R$ says that for any unimodular vector $(a_1, \ldots, a_{n+1})^t$ in $R^{n+1}$, there exist elements $b_1, \ldots, b_n \in R$ such that $(a_1 + a_{n+1}b_1, \ldots, a_n + a_{n+1}b_n)^t \in R^n$
is unimodular. The \textit{stable rank} of $R$, denoted by $sr(R)$, is the smallest number $n$ such that ${\rm SR}_n$ holds for $R$.

\vspace{2mm}

Let $R$ be a ring with $1$ and pseudoinvolution $a \mapsto \overline{a}$. Set $$\Lambda_{\min} = \{ a + \bar{a} \mid a \in R \}, 
\quad 
\Lambda_{\max} = \{ a \mid \bar{a} = a \}.$$

\par A Bak's form parameter on $R$ is an additive subgroup 
$\Lambda$ lying between $\Lambda_{\min}$ and $\Lambda_{\max}$ such that
$\bar{r}\,\Lambda\,r \subseteq \Lambda
\quad \text{for every } r \in R.$

\vspace{2mm}

\par We say the ring satisfies the \textit{$\Lambda$-stable rank condition} $\Lambda {\rm S}_n$ if $sr(R) \leq n$ and for every unimodular vector $(a_1, \ldots, a_{n+1}, b_1, \ldots, b_{n+1})^t \in R^{2n+2}$, 
there exists an $(n+1) \times (n+1)$ matrix $\beta$ with $\bar{\beta} = \bar{1} \beta \bar{1}$ and $\bar{1} \beta_{ii} \in \Lambda$, such that
$(a_1, \ldots, a_{n+1})^t + \beta (b_1, \ldots, b_{n+1})^t \in R^{n+1}$
is unimodular. The \textit{$\Lambda$-stable rank} of $R$, denoted by $sr(R, \Lambda)$, is the smallest number $n$ such that $\Lambda {\rm S}_n$ is satisfied.

\subsection{DSER Elementary Orthogonal Group} \label{sub2.2}
\par Let $R$ be a commutative ring with $1$ and $\frac{1}{2} \in R$. A projective $R$-module $M$ together with a quadratic form $q$ on $M$ is called a \textit{quadratic $R$-space}, denoted by
$(M, q)$. Let $B_q$ be the symmetric bilinear form on $M$ corresponding to $q$; that is, for $u,v \in M$, we have $B_q(u,v) = q(u+v) - q(u) - q(v)$. The quadratic form $q$ is said to be \textit{nonsingular} if the map $d_{B_q}
: M \rightarrow M^*$ induced by $d_{B_q}
(x) = B_q(x,\cdot)$, for all $x \in M$ is an isomorphism. Denote by ${\rm O}_R(M,q)$ the \textit{group of orthogonal transformations}, i.e. the group of linear automorphisms on $M$ that preserve the form $q$.
\par Let $P$ be a finitely generated projective $R$-module and $P^* = {\rm Hom}_R(P,R)$ be its dual. The module $\mathbb{H}(P) = P \oplus P^*$ equipped with the quadratic form $\rho : (x,f) \mapsto f(x)$, where $x \in P$ and $f \in P^*$ is called the \textit{hyperbolic space} of $P$. It is easy to verify that $\rho$ is a nonsingular form.

\par Let $P$ be as above and $(Q,q)$ be a non-singular quadratic $R$-space. Consider the quadratic space $M = Q\!\perp \!\mathbb{H}(P)$ with the quadratic form $q\perp \rho$. Consider $R$-linear maps $\alpha : Q \rightarrow P$ and $\beta : Q \to P^*$. Define their transpose maps, consider $\alpha^t : P^* \to Q^*$ by $\alpha^t(\varphi) = \varphi \circ \alpha$, for $\varphi \in P^*$. One can define $\beta^t$ in the same way.
\par Now define $\alpha^* : P^* \to Q$ by  $\alpha^* = d^{-1}_{B_q} \circ \alpha^t$,
and $ \beta^* : P \to Q$, by $\quad \beta^* = d^{-1}_{B_q} \circ \beta^t \circ \varepsilon$,
where $\varepsilon$ is the natural isomorphism from $P$ to $P^{**}$.

\par In \cite{Roy}, Amit Roy defined the linear transformations $E_\alpha$ and $E^*_\beta$ 
on $Q \perp \mathbb{H}(P)$ as follows and verified that they are orthogonal transformations:

\vspace{2mm}

For $z \in Q$, $x \in P$, and $f \in P^*$,
\[
E_\alpha(z,x,f) = (z - \alpha^*(f), \; x + \alpha(z) - \tfrac{1}{2}\alpha \alpha^*(f), \; f),
\]
\[
E^*_\beta(z,x,f) = (z - \beta^*(x), \; x, \; f + \beta(z) - \tfrac{1}{2}\beta \beta^*(x)).
\]

\begin{definition}
The subgroup of ${\rm O}_R(M)$ generated by $E_\alpha$ and $E^*_\beta$, for all linear maps $\alpha \in \mathrm{Hom}_R(Q,P)$ and $\beta \in \mathrm{Hom}_R(Q,P^*)$, is called 
\textit{Dickson-Siegel-Eichler-Roy (DSER) elementary orthogonal group} and is denoted by ${\rm EO}_R(Q, \mathbb{H}(P))$.
\end{definition}

The following normality result of the DSER elementary orthogonal groups was proven in \cite[Theorem 4.1]{AAN}
\begin{lemma} \label{normal}
Let $R$ be a commutative ring where $2$ is invertible. Let $P$ be a finitely generated projective $R$-module and $(Q,q)$ be a non-singular quadratic $R$-space such that rank($P$) $> 2$ and rank($Q$) $> 1$. Then the subgroup ${\rm EO}_R(Q, \mathbb{H}(P))$ is normal in ${\rm O}_R(Q\perp\mathbb{H}(P))$.
\end{lemma}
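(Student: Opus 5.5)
The plan is to prove normality by a local-global argument in the spirit of Quillen and Suslin, reducing to the case where $P$ and $Q$ are free. There the DSER group is, up to congruence, a classical elementary orthogonal group, whose normality is known (Suslin--Kopeiko, Petrov, and Ambily's free-case work in \cite{AAK}).

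The key tool is a \emph{local-global principle}. Let $\sigma(X)\in {\rm O}_{R[X]}(Q[X]\perp\mathbb{H}(P[X]))$ with $\sigma(0)={\rm Id}$. Suppose that for every maximal ideal $\mathfrak m$ of $R$ the localization $\sigma(X)_{\mathfrak m}$ lies in ${\rm EO}_{R_{\mathfrak m}[X]}(Q_{\mathfrak m}[X],\mathbb{H}(P_{\mathfrak m}[X]))$. Then $\sigma(X)$ lies in ${\rm EO}_{R[X]}(Q[X],\mathbb{H}(P[X]))$.

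I would prove this principle in three steps:
\begin{enumerate}[(i)]
\item \emph{Splitting.} Show that $E_\alpha$ and $E^*_\beta$ behave additively, $E_{\alpha_1+\alpha_2}=E_{\alpha_1}E_{\alpha_2}$, and similarly for $E^*$. This lets one factor elementary generators with coefficients in $sR_s[X]$ into products of generators with arbitrarily divisible coefficients.
\item \emph{Dilation.} Let $s\in R$ be a non-nilpotent element with $P_s$ and $Q_s$ free, and with $\sigma(X)_s$ elementary. Show that for $N\gg 0$ the element $\sigma(X)_s\,\sigma(bX)_s^{-1}$ lies in the image of ${\rm EO}_{R[X]}$ whenever $b\in (s^N)$. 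This uses a commutator computation: conjugating an elementary generator $E_{s^N\alpha}$ by an elementary matrix over $R_s$ lands back in the image of ${\rm EO}_R$ over $R$. The conjugating elements are handled via the free-case commutator formulas, which are available because generators in the free case correspond to Eichler--Siegel--Dickson transvections.
\item \emph{Patching.} Cover ${\rm Spec}\,R$ by finitely many such $D(s_i)$ with $\sum s_i^{N}b_i=1$, and glue in the standard way.
\end{enumerate}

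With the principle established, the normality proof goes as follows. Let $\tau\in {\rm O}_R(M)$ and let $E$ be a generator, say $E=E_\alpha$. Put
\[
\sigma(X)=\tau E_{X\alpha}\tau^{-1}.
\]
Then $\sigma(0)={\rm Id}$, so it suffices to check that $\sigma(X)_{\mathfrak m}$ is elementary for every maximal ideal $\mathfrak m$. Over the local ring $R_{\mathfrak m}$ the module $P_{\mathfrak m}$ is free of rank $>2$, and $Q_{\mathfrak m}$ is free and nonsingular of rank $>1$. In this situation ${\rm EO}$ coincides with the free-case elementary orthogonal group, which is normal in the full orthogonal group over $R_{\mathfrak m}[X]$. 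This normality is exactly where the rank hypotheses enter. Hence $\sigma(X)\in{\rm EO}_{R[X]}$, and specializing at $X=1$ gives $\tau E_\alpha\tau^{-1}\in{\rm EO}_R(Q,\mathbb{H}(P))$. The case $E^*_\beta$ is symmetric.

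I expect the dilation step (ii) to be the main obstacle. The commutator identities among the $E_\alpha$ and $E^*_\beta$ for non-free $P$ and $Q$ have to be written intrinsically in terms of ${\rm Hom}$ modules. The natural identity to aim for is
\[
[E_\alpha,E^*_\beta]=\text{a product of generators with coefficients built from } \alpha\beta^*,\ \beta\alpha^*,
\]
and I would work it out first. The powers of $s$ must also be tracked through these conjugations.
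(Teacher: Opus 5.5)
The paper gives no proof of this lemma; it quotes it from \cite[Theorem 4.1]{AAN}. Your top-level argument is the standard Quillen--Suslin derivation of normality and is sound. The homotopy $\sigma(X)=\tau E_{X\alpha}\tau^{-1}$ has $\sigma(0)={\rm Id}$. At each maximal ideal, $Q_{\mathfrak m}$ is free (indeed diagonalizable, since $R_{\mathfrak m}$ is local and $2$ is invertible), so free-case normality over the ring $R_{\mathfrak m}[X]$ puts $\sigma(X)_{\mathfrak m}$ in ${\rm EO}$. The local--global principle and specialization at $X=1$ then finish. The principle you need is exactly Lemma~\ref{LG}, which the paper quotes from the same source. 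So steps (i)--(iii) can be replaced by that citation.

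If you prove the principle yourself, two statements in your sketch are wrong as written.

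\emph{Additivity fails.} A direct computation gives
\[
E_{\alpha_1}E_{\alpha_2}=E_{\alpha_1+\alpha_2}\,T,\qquad T(z,x,f)=\bigl(z,\;x+\tfrac12(\alpha_2\alpha_1^*-\alpha_1\alpha_2^*)(f),\;f\bigr),
\]
and $\alpha_2\alpha_1^*-\alpha_1\alpha_2^*\neq 0$ in general once ${\rm rank}\,P\ge 2$. The factor $T$ commutes with every $E_\alpha$ and equals the commutator $[E_{\frac12\alpha_1},E_{\alpha_2}]$. The analogue holds for $E^*_\beta$, with the correction acting on the $P^*$-component. So your splitting must carry these commutator factors, and so must the rewriting of $E_{\alpha(0)}^{-1}E_{\alpha(X)}$ as a generator with $X$-divisible coefficient. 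They are harmless, since their coefficients are bilinear in the two maps and inherit the divisibility (by $s$ or $X$) of either factor. But the dilation estimates have to track them.

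\emph{The dilation claim is too strong.} Since $0\in(s^N)$, taking $b=0$ it asserts that $\sigma(X)_s$ itself lifts to ${\rm EO}_{R[X]}$, which is the whole conclusion. What patching actually needs is the following. Put $\Theta(X,Y)=\sigma(X+Y)\sigma(X)^{-1}$, so $\Theta(X,0)={\rm Id}$. Then for $N\gg 0$ and $b\in(s^N)$ one has $\Theta(X,bY)\in{\rm EO}_{R[X,Y]}(Q[X,Y],\mathbb{H}(P[X,Y]))$. This must be an honest equality over $R[X,Y]$, not just after inverting $s$; it uses that the kernel of $R\to R_s$ is killed by a power of $s$, e.g.\ for Noetherian $R$. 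Finally, take $\sum_i b_i=1$ with $b_i\in(s_i^N)$ and $c_i=b_1+\dots+b_i$. Then one telescopes
\[
\sigma(X)=\Theta(c_{r-1}X,b_rX)\cdots\Theta(c_0X,b_1X).
\]
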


\subsection{Petrov's Odd Unitary Group}
\par V. A. Petrov introduced \text{odd unitary groups} in \cite{Petrov} that generalized many classical groups. We give a brief description here for the readers' convenience. Let $R$ be an associative ring with unity equipped with a pseudo-involution $a \mapsto \overline{a}$ for all $a \in R$. Let $M$ be a finitely generated $R$-module with an anti-Hermitian sesquilinear form $B$. Define the \textit{Heisenberg group} $\mathfrak{H} = M \times R$ with the operation ``$\dot +$" defined by,
$$(u,a) \dot + (v,b) = (u+v, a+b+B(u,v)) \text{   for } u,v \in M \text{ and } a,b \in R.$$
\par Define a right action of $R$ on $\mathfrak{H}$ by
$(u,a) \leftharpoonup b = (ub, \overline{b}\barone ab);$
and the trace map $tr : \mathfrak{H} \rightarrow R$ on $\mathfrak{H}$ by the formula
$tr((u,a)) = a - \overline{a} - B(u,u).$ Consider the following chain of subgroups of $\mathfrak{H}$: $$\{(0,a+\overline{a}) \in \mathfrak{H} | a \in R \} = \mathfrak{L}_{min} \leq \mathfrak{L} \leq \mathfrak{L}_{max} = {\rm ker}(tr).$$ 
\par A subgroup $\mathfrak{L}$ as above is called an \textit{Odd Form Parameter} if $(\mathfrak{L} \leftharpoonup R) \leq \mathfrak{L}$. We call $\qd = (B, \mathfrak{L)}$ an \textit{Odd Quadratic Form} on M. The \textit{Odd Unitary Group} $\text{U}(M,Q)$ is the group of all isometric automorphisms $\sigma$ of $M$, which satisfy $(\sigma u - u, B(u-\sigma u,u)) \in \mathfrak{L}$ for all $u \in M$. Set $\mathfrak{L}_{ev} = \{a \in R| (0, a) \in \mathfrak{L}\}$. Then $\mathfrak{L}_{ev}$ is a form parameter in the sense of Bak \cite{Bak}.
\par For any odd quadratic space $(M, \qd)$, \textbf{Eichler-Siegel-Dickson Transvections} are defined as follows : for $u,v \in M$ and $a \in R$ such that $B(u,v) = 0$ and $(u,0)\in\mathfrak{L},(v,a) \in \mathfrak{L}$,
$$T_{u,v}(a) : w \mapsto w + u\barone (B(v,w) + aB(u,w))+vB(u,w).$$
\begin{definition}
 Transvections $T_{u,v}(a)$ lie in ${\rm U}(M,\qd)$. The subgroup of ${\rm U}(M,\qd)$ generated by transvections $T_{u,v}(a)$, where $u$ is a unimodular element of $M$ is called the \textbf{Odd Elementary Unitary Group} and is denoted ${\rm EU}(M, \qd)$ following the notations of \cite{Petrov} and \cite{YuLL}.
\end{definition}
\par Let $M$ be a non-singular quadratic space. Then for any unimodular element $e_{+}$ in $M$, there exists another $e_{-} \in M$ such that $B(e_{+},e_-) = 1$. We say $(e_+,e_-)$ is a hyperbolic pair. It is easy to see that the space spanned by a hyperbolic pair is isomorphic to $\mathbb{H}(R)$. Equipped with the odd form parameter generated by $(e_+,0)$ and $(0,e_-)$, this space isometrically maps into $(M,\qd)$. \textit{Witt Index}, denoted by $ind(M,\qd)$ is the greatest integer $l$ satisfying the condition that there exist $l$ mutually orthogonal hyperbolic pairs, $(e_{+1},e_{-1}), \cdots (e_{+l},e_{-l})$ in $(M,\qd)$. In that case, we can write $M = \mathbb{H}(R^l)\oplus M_0$, where $M_0$ is the orthogonal complement to the basis of the hyperbolic subspace. In this case, we denote U$(M,\qd) =$U$_{2l}(R, \mathfrak{L_0})$ and call it \textit{Odd Hyperbolic Unitary Group}. The elementary subgroup generated by $T_{e_i,e_ja}(0)$'s for $j\neq \pm i$ and $a \in R$ and $T_{e_{\pm i},v}(a)$'s for $(v,a) \in \mathfrak{L}$ denoted ${\rm EU}_{2l}(R, \mathfrak{L_0})$ and called \textit{Odd Elementary Hyperbolic Unitary Group}. Note that, $\mathfrak{L_0} = \mathfrak{L}|_{M_0}$.

\vspace{2mm}
\par Denote by EU$_{(e_+,e_-)}(M, \qd)$ the group
generated by all elements of the form $T_{e_+,v}(a)$ and $T_{e_-,v}(a)$, where $(e_+,e_-)$ is a hyperbolic pair, $B(e_+,v) = B(e_-,v) = 0$ and
$(v,a) \in \mathfrak{L}$. It is easy to see that ${\rm EU}_{(e_+,e_-)}(M, \qd) = {\rm EU}_{2l}(R,\mathfrak{L})$.
V. A. Petrov \cite[Theorem 3 and 4]{Petrov} and Weibo Yu \cite[Corollary 3.2 and 3.3]{YuSt} proved the following result independently. 
\begin{lemma} \label{transitive}
    Let $ind(M,\qd) \geq sr(R, \mathfrak{L}_{ev})+1$. Then, for any hyperbolic pair $(u,v)$, the group ${\rm EU}_{(u,v)}(M, Q)$ acts transitively on all hyperbolic pairs in $M$. In fact, ${\rm EU}_{(u,v)}(M, \qd)$ and ${\rm EU}(M, \qd)$ denote the same subgroup normal in ${\rm U}(M, \qd)$
\end{lemma}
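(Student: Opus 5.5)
We need to propose a proof of Lemma \ref{transitive}. It is cited from Petrov and Yu, but we still sketch how we would prove it.

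The plan has two parts: transitivity of ${\rm EU}_{(u,v)}(M,\qd)$ on hyperbolic pairs, followed by the identification ${\rm EU}_{(u,v)}(M,\qd)={\rm EU}(M,\qd)$ and normality. Write $l=ind(M,\qd)$ and fix orthogonal hyperbolic pairs $(e_{\pm1}),\dots,(e_{\pm l})$ with $(u,v)=(e_{1},e_{-1})$, so that $M=\mathbb{H}(R^l)\oplus M_0$. It suffices to move an arbitrary hyperbolic pair $(x,y)$ to $(e_1,e_{-1})$ by elements of ${\rm EU}_{(e_1,e_{-1})}(M,\qd)$, since the group acts on pairs.

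\textbf{Step 1: move $x$ to $e_1$.} Write $x=\sum_i e_i x_i + x_0$ with $x_0\in M_0$. Since $x$ is unimodular, the column of hyperbolic coordinates together with the $M_0$-part is unimodular in a suitable sense. We use the transvections $T_{e_{\pm i},w}(a)$ with $w\in M_0$ to absorb $x_0$ into the hyperbolic coordinates, so that the vector $(x_1,\dots,x_l,x_{-1},\dots,x_{-l})$ becomes unimodular in $R^{2l}$; this is the analogue of the orthogonal reduction through the odd part. Since $l\ge sr(R,\mathfrak{L}_{ev})+1$, the condition $\Lambda{\rm S}_{l-1}$ yields a matrix $\beta$ with the required symmetry. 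The elementary matrix in ${\rm EU}_{2l}$ realizing $x_+\mapsto x_++\beta x_-$ makes $(x_1,\dots,x_l)$ unimodular. Then ordinary stable rank, applied through the $T_{e_i,e_ja}(0)$ generators, reduces $x$ to one with $x_1=1$, and further transvections clear all remaining coordinates, giving $x=e_1$.

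\textbf{Step 2: move $y$ to $e_{-1}$.} Now $B(e_1,y)=1$. We subtract the component of $y$ along the hyperbolic pairs and along $M_0$ using transvections $T_{e_1,w}(a)$, which fix $e_1$; this gives $y=e_{-1}$. The trace condition $(y,\cdot)\in\mathfrak{L}$ is what guarantees that the needed $a$ exists.

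\textbf{Step 3: the group statements.} Every generator $T_{e_i,e_ja}(0)$ is a commutator of $T_{e_{\pm1},\ast}$-type elements. This follows from Chevalley-type commutator formulas in the odd unitary group, and it needs $l\ge 2$, which holds. Hence ${\rm EU}_{2l}\subseteq{\rm EU}_{(e_1,e_{-1})}$, and the reverse inclusion is clear. For a general transvection $T_{w,z}(a)$ with $w$ unimodular, transitivity lets us conjugate $w$ into the first hyperbolic pair via some $g\in{\rm EU}_{(e_1,e_{-1})}$. Then
$$gT_{w,z}(a)g^{-1}=T_{gw,gz}(a)\in{\rm EU}_{(e_1,e_{-1})},$$
so ${\rm EU}(M,\qd)={\rm EU}_{(e_1,e_{-1})}(M,\qd)$. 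Normality follows because for $\sigma\in{\rm U}(M,\qd)$ we have $\sigma T_{w,z}(a)\sigma^{-1}=T_{\sigma w,\sigma z}(a)$, and $\sigma w$ is again unimodular. Independence of the choice of $(u,v)$ follows the same way.

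\textbf{Main obstacle.} The main obstacle is Step 1: making $(x_1,\dots,x_l,x_{-1},\dots,x_{-l})$ unimodular while $x_0$ lives in the anisotropic-like part $M_0$. This requires careful use of the odd form parameter so that the transvections that move $x_0$ are legitimate, i.e. the relevant pairs lie in $\mathfrak{L}$.
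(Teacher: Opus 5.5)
The paper gives no proof of this lemma; it is quoted from Petrov and Yu. Your architecture is the standard one: $\Lambda{\rm S}_{l-1}$ on the hyperbolic coordinates, then ordinary stable rank, then clearing with $T_{e_{\pm1},\ast}$, then conjugation for the group statements. But the step you flag as the main obstacle is genuinely missing, and the mechanism you indicate for it cannot work in the generality of the lemma.

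You want to make $(x_1,\dots,x_l,x_{-1},\dots,x_{-l})$ unimodular using only that $x$ is unimodular, via $T_{e_{\pm i},w}(a)$ with $w\in M_0$. Such a transvection exists only if $(w,a)\in\mathfrak L$ for some $a$. It also feeds $x_0$ into the hyperbolic coordinates only through $B(w,x_0)$, and the admissible $w$ need not generate the order ideal of $x_0$.

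For instance, let $M_0=Rv_1\oplus Rv_2$ with $B(v_1,v_1)=B(v_2,v_2)=0$ and $B(v_1,v_2)=1$. Let $\mathfrak L$ be generated by the $(e_{\pm i},0)$ and $(v_1,0)$. Then $M$ is nonsingular and $v_1$ is unimodular with $(v_1,0)\in\mathfrak L$, yet every generator of ${\rm EU}_{2l}$ fixes $v_1$. So an argument for Step 1 that uses only the unimodularity of $x$ proves too much; this is why the lemma is about hyperbolic pairs.

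Even for $\mathfrak L=\mathfrak L_{max}$ with $B|_{M_0}$ nonsingular, where every $w$ is admissible, you still must control two side effects. $T_{e_{\pm i},w}(a)$ shifts $x_0$ by $w$ times a multiple of $x_{\mp i}$. When both signs are used, it also changes the $x_{-i}$. You give no such control.

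The missing idea is to use the partner $y$. Since $B(y,x)=-\overline{B(x,y)}$ is a unit and $B$ is linear in the second variable, we have $B(y,x)=\sum_i B(y,e_i)x_i+B(y_0,x_0)$. So with $c=B(y_0,x_0)$ and $I=(x_{-1},\dots,x_{-l})$, the row $(x_1,\dots,x_l,c)$ is unimodular modulo $I$.

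Moreover, $y_0$ is automatically admissible. The hyperbolic part $y_h$ lifts to some $(y_h,b)\in\mathfrak L$, so $(y,0)\in\mathfrak L$ gives $(y_0,b')\in\mathfrak L$ for some $b'$. Hence $(y_0r,\ast)\in\mathfrak L$ for all $r$.

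Now apply $T_{e_i,y_0r_i}(a_i)$ for $i=1,\dots,l$ only. These fix all $x_{-j}$, and the drift of $x_0$ contributes only elements of $I$. Each $x_i$ becomes $x_i+\rho_ic$ modulo $I$, with $\rho_i$ running over $R$ as $r_i$ does. Since $sr(R/I)\le sr(R)\le l-1$, you can choose the $r_i$ so that the hyperbolic part becomes unimodular. Only then does $\Lambda{\rm S}_{l-1}$ apply.

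Two smaller repairs are also needed.

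First, your clearing steps ending in $x=e_1$ and $y=e_{-1}$ need isotropy, $(x,0),(y,0)\in\mathfrak L$, which is part of Petrov's definition of a hyperbolic pair. For $\mathfrak L_{max}$ the condition $(y,\cdot)\in\mathfrak L$ that you invoke is vacuous. Since isometries preserve $q$, $y$ can reach $e_{-1}$ only if $q(y)=0$.

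Second, Steps 1--2 use generators $T_{e_{\pm i},v}(a)$ with $i\ge2$. Your commutator argument only puts the $T_{e_i,e_ja}(0)$ into ${\rm EU}_{(e_1,e_{-1})}$. Deriving the other generators from the conjugation argument in Step 3 is circular, since that argument presupposes transitivity of ${\rm EU}_{(e_1,e_{-1})}$. Instead, obtain them by conjugating $T_{e_{\pm1},v}(a)$ with a Weyl-type element built from the $T_{e_{\pm1},e_{\pm i}a}(0)$.
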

\subsection{Local-Global Principle}
Let $\mathfrak{m}$ be any maximal ideal of $R$. Denote $P_{\mx} = P \otimes R_{\mx}$ for any projective $R$-module $P$ of rank $m$. $P_{\mx}$ is a free $R_{\mx}$-module. The local-global principle is a technique, notably developed independently by Quillen and Suslin, where results true for local rings or free modules can be appropriately pulled back to general rings and projective modules.
\begin{note}
    We note here the fact that finitely generated projective modules are locally free (i.e. their localization at each prime ideal is free). If the module is also finitely presented, it is projective if and only if  it is locally free. If $R$ is Noetherian, then a module is finitely generated if and only if it is finitely presented. 
\end{note}
\par We will use the localization 
$S^{-1}R$ and the polynomial ring $R[X]$ of a ring $R$ with a multiplicative system $S$ of $R$. Let $a \mapsto \bar{a}$ be the pseudo-involution 
on $R$ and let us consider only those multiplicative systems $S \subset R$ whose elements $s$ are central and satisfy the condition $\overline{s} = s\overline{1}$. Then we can define a pseudo-involution $\sigma$ on $S^{-1}R$ by 
$\sigma\!\left(\frac{a}{s}\right) = \frac{\overline{a}}{s}$. Then the natural 
homomorphism $R \to S^{-1}R$ induces a homomorphism from $U(M,\qd)$ to 
$U(S^{-1}M, S^{-1}\qd)$. Also for an indeterminate $X$, extend the pseudo-involution 
on $R$ to $R[X]$ by defining $\overline{X} = X\overline{1}$. Then the natural inclusion 
$R \to R[X]$ induces a homomorphism $(M,\qd)$ to $(M[X], \qd[X])$.
\par Similarly for any commutative ring $R$ in which $2$ is invertible and $S \subset R$ multiplicative system, we can define O$_{S^{-1}R}(S^{-1}M, S^{-1}q)$ and O$_{R[X]}(M[X], q[X])$.
 \par The following \textbf{Local-Global Principle} was proven in \cite{AAN}.
 
 \begin{lemma}\label{LG}
Let $R$ be a commutative ring with identity in which $2$ is invertible. 
Let $(Q,q)$ be a quadratic $R$-space of rank $n \geq 1$ and let $P$ be a finitely generated projective $R$-module of rank $m \geq 2$. 
Let $M = Q\! \perp \!\mathbb{H}(P)$. Assume that for every maximal ideal $\mx$ of $R$, the module $M_{\mx}$ is isomorphic to $R_{\mx}^{n+2m}$ with the canonical bilinear form. Suppose that $\theta(X) \in {\rm O}_{R[X]}(M[X])$ with $\theta(0) = I$. If $\theta_{\mx}(X) \in {\rm EO}_{R_{\mx}[X]}(Q_{\mx}[X], \mathbb{H}(R_{\mx}[X])^m)$ for all ${\mx} \in \mathrm{Max}(R)$, 
then we have $\theta(X) \in {\rm EO}_{R[X]}(Q[X], \mathbb{H}(P[X]))$.
\end{lemma}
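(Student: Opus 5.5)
We would prove Lemma \ref{LG} by Quillen's patching argument, adapted to the DSER generators $E_\alpha$, $E^*_\beta$. Set
\[
J=\{\,s\in R \;:\; \theta_s(bX)\in {\rm EO}_{R_s[X]}(Q_s[X],\mathbb{H}(P_s[X])) \text{ for all } b\in R\,\}.
\]
The first step is to show that $J$ meets the complement of every maximal ideal. Fix $\mx$. By hypothesis $\theta_{\mx}(X)$ is a finite product of elementary generators over $R_{\mx}[X]$ relative to a basis of $P_{\mx}$. Only finitely many denominators occur, so we can choose $s\notin\mx$ with three properties: $P_s$ is free and $Q_s$ carries the given form; all the maps $\alpha,\beta$ occurring in the product are defined over $R_s[X]$; and the product equals $\theta_s(X)$ over $R_s[X]$. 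The last property uses that $R$ is Noetherian, so the kernel of $R_s\to R_{\mx}$ is killed after inverting one more element. Replacing $X$ by $bX$ keeps the product elementary, hence $s\in J$.

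The second step is to show that $J$ is an ideal. Closure under multiplication by $R$ is immediate, since $R_s\to R_{rs}$ maps elementary generators to elementary generators. The key step is closure under sums, and for it we need a \emph{dilation lemma}: if $s\in J$, then there is $k\ge 0$ such that, for every $c\in s^kR$, the element $\theta(X+cY)\theta(X)^{-1}$ lies in the image of ${\rm EO}_{R[X,Y]}(Q[X,Y],\mathbb{H}(P[X,Y]))$, and the same holds in the one-variable form $\theta(cX)\in{\rm EO}_{R[X]}$.

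To prove the dilation lemma, write $\theta_s(X)$ as a product of generators $E_{\alpha_i(X)}$, $E^*_{\beta_i(X)}$ with $\alpha_i,\beta_i$ defined over $R_s[X]$. We would then use the commutator and splitting identities for DSER generators, in particular $E_{\alpha+\alpha'}=E_\alpha E_{\alpha'}$ and its analogue for $E^*$, together with the commutator formulas $[E_\alpha,E^*_\beta]$ expressed through elementary maps as in \cite{AAN}. These identities show that a conjugate $g\,E_{s^N\gamma(Y)}\,g^{-1}$, with $g$ elementary over $R_s[X]$ and $N$ large, is a product of generators whose defining maps have no denominators, that is, are defined over $R[X,Y]$. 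Here the rank hypotheses $m\ge 2$, $n\ge 1$ are needed so that these formulas are available. Because $\theta(0)=I$, the difference $\theta(X+cY)\theta(X)^{-1}$ expands into such conjugates with numerators divisible by a power of $c$. One also has to check that an element of ${\rm O}_{R[X]}$ that is trivial at $X=0$ and elementary after inverting $s$ lifts correctly; this is where Noetherianity enters again, to control the $s$-torsion in the kernel of $R\to R_s$, which is annihilated by a fixed power of $s$.

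Granting the dilation lemma, the rest is standard. Let $s,t\in J$. Choose $k$ large enough for both, and write $as^k+bt^k=1$ for some $a,b\in R$. Then
\[
\theta(X)=\bigl[\theta(X)\,\theta(bt^kX)^{-1}\bigr]\,\theta(bt^kX).
\]
The first factor is $\theta(bt^kX+as^kX)\theta(bt^kX)^{-1}$, which is elementary by the dilation lemma at $s$. The second factor is elementary by the dilation lemma at $t$. Applying the same argument with $X$ replaced by $rX$ shows $s+t\in J$. Since $J$ is an ideal not contained in any maximal ideal, we get $J=R$, so $1\in J$ and $\theta(X)\in{\rm EO}_{R[X]}(Q[X],\mathbb{H}(P[X]))$.

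The main obstacle is the dilation lemma, specifically bookkeeping the commutator formulas for $E_\alpha$ and $E^*_\beta$ over projective $P$ while passing between the free module $P_s$ and $P$ itself. We would first prove it over $R_s$ with $P_s$ free, using the explicit formulas in coordinates, and then descend by clearing denominators with a power of $s$.
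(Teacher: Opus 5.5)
The paper gives no proof of this lemma; it quotes it from \cite{AAN}. Your architecture (a dilation lemma followed by Quillen patching) is the standard route to such a result, but two steps fail as written.

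First, the closure of $J$ under sums. For arbitrary $s,t\in J$ there are in general no $a,b\in R$ with $as^k+bt^k=1$ (e.g.\ $s=t$ a non-unit). If you pass to $R_{s+t}$ to make them comaximal, you need the dilation lemma at an arbitrary $s\in J$. Your sketch of that lemma only covers those $s$ for which $P_s$ and $Q_s$ are free, and membership in $J$ does not give this. The fix is to patch with $J'=\{c\in R:\ \theta(X+cY)\theta(X)^{-1}\in{\rm EO}_{R[X,Y]}(Q[X,Y],\mathbb{H}(P[X,Y]))\}$. This is an ideal for formal reasons: $Y\mapsto rY$ gives multiples, and
\[
\theta(X+(c+c')Y)\theta(X)^{-1}=\bigl[\theta(X+c'Y+cY)\theta(X+c'Y)^{-1}\bigr]\bigl[\theta(X+c'Y)\theta(X)^{-1}\bigr],
\]
where the first factor is the image of $\theta(X+cY)\theta(X)^{-1}$ under the substitution $X\mapsto X+c'Y$, which maps the DSER group over $R[X,Y]$ into itself. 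Your first step plus the dilation lemma, applied only at the $s\notin\mathfrak{m}$ with $P_s,Q_s$ free, gives $s^k\in J'$. Hence $J'=R$, and putting $X=0$ in $\theta(X+Y)\theta(X)^{-1}$ gives $\theta(Y)\in{\rm EO}_{R[Y]}(Q[Y],\mathbb{H}(P[Y]))$. Also, the lemma does not assume $R$ Noetherian, and you do not need it. In both places only finitely many coefficients of a difference of two endomorphisms of the finitely generated projective module $M[X]$ (resp.\ $M[X,Y]$) have to be annihilated, by an element outside $\mathfrak{m}$ or by a power of $s$.

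Second, the splitting identity $E_{\alpha+\alpha'}=E_\alpha E_{\alpha'}$ that you list, and that your expansion into conjugates $gE_{s^N\gamma}g^{-1}$ implicitly uses, is false. From Roy's formulas, $E_\alpha E_{\alpha'}$ and $E_{\alpha+\alpha'}$ have the same $Q$- and $P^*$-components, but their $P$-components differ by $\tfrac12(\alpha'\alpha^*-\alpha\alpha'^*)(f)$. In fact $E_\alpha E_{\alpha'}=[E_{\alpha/2},E_{\alpha'}]\,E_{\alpha+\alpha'}$, where $[E_\alpha,E_{\alpha'}]$ is the map $(z,x,f)\mapsto(z,x+(\alpha'\alpha^*-\alpha\alpha'^*)(f),f)$. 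This correction is genuinely nonzero: for $Q=R$, $q(z)=z^2$, $P=R^2$, $\alpha(z)=ze_1$, $\alpha'(z)=ze_2$, one gets $\tfrac12(\alpha'\alpha^*-\alpha\alpha'^*)(e_1^*)=\tfrac{1}{4}e_2$. Hence $\theta_s(X+cY)\theta_s(X)^{-1}$ decomposes not only into conjugates $gE_{c\delta}g^{-1}$ and $gE^*_{c\delta'}g^{-1}$. It also involves conjugates of the alternating maps $(z,x,f)\mapsto(z,x+\lambda(f),f)$ with $\lambda$ divisible by $cY$, and these must be carried through the estimates as well.

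Beyond this, you only assert the descent step: that $gFg^{-1}$, for $g\in{\rm EO}_{R_s[X]}$ and $F$ one of these generators with parameter in $s^NY\cdot R[X,Y]$, is the image of an element of ${\rm EO}_{R[X,Y]}(Q[X,Y],\mathbb{H}(P[X,Y]))$ trivial at $Y=0$. That is the entire content of the lemma. It needs an induction on the length of $g$, using explicit commutator formulas for the generators written in a basis $p_1,\dots,p_m$ of $P_s$ (taken to be images of elements of $P$), with each conjugation costing a bounded power of $s$. It also needs the observation that, for $N$ large, the rank-one maps $s^Np_i\otimes\phi$ and $s^Np_i^*\otimes\psi$ lift to $\mathrm{Hom}_R(Q,P)\otimes R[X,Y]$ and $\mathrm{Hom}_R(Q,P^*)\otimes R[X,Y]$.
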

\section{Comparison of Petrov's Odd Elementary Unitary Group and DSER Elementary Orthogonal Group}

\begin{remark} \label{remark}
    
 Let $R$ be a commutative ring where $2$ is invertible. Define on $R$ with a pseudo-involution $\bar{a} = -a$. Set the odd form parameter $\mathfrak{L} = \mathfrak{L}_{max}$ on a projective $R$-module $M$ equipped with the bilinear form $B$. Set $\qd=(B,\mathfrak{L}_{max})$. Then we observe that odd unitary transformations and orthogonal transformations coincide, i.e. $$\text{U}(M,\qd) =\text{O}_R(M,\qd).$$
Now, let $M = Q\!\perp \!\mathbb{H}(P)$ where $P$ is a projective $R$-module of rank $m$ and $(Q,q)$ is a non-singular quadratic $R$-space of rank $n$. Let $B$ be the canonical bilinear form on $M$ as defined in Subsection \ref{sub2.2}. Now let $N = \mathbb{H}(P) \perp Q$. Note that the map $\Phi: M\rightarrow N$ given by $\Phi: (z,x,f) \mapsto (x,f,z)$ (for $z\in Q$ and $(x,f) \in \mathbb{H}(P)$) is an isomorphism. We equip $N$ with the same bilinear form $B$ (we can do so because direct sum is a commutative operator in the category of finitely generated projective modules.) and define the odd quadratic form $\qd = (B, \mathfrak{L}_{max})$ on $N$. Since the bilinear forms on $M$ and $N$ are essentially same, we write $(M,\qd)$ as a quadratic space and observe that, $$\text{U}(N,\qd) =\Phi\text{O}_R(M,\qd)\Phi^{-1}$$
\end{remark}
\par The natural question that arises here is if we can compare the elementary subgroups in the respective cases. If P and Q are both free modules, a positive answer is already given. In fact an isomorphism between Petrov's odd elementary unitary group and DSER elementary orthogonal group was proven in \cite{comp}. We restate it here :

\begin{seclemma}\label{eq} 
    Let $R$ be a commutative ring with unity in which $2$ is invertible together with the pseudo-involution  $a \mapsto -a$ and let $(Q,q)$ be a free quadratic space with rank $n$ and $P$ be a free module of rank $m$. 
Then the Petrov’s odd elementary hyperbolic unitary group $\mathrm{EU}_{2m}(R, \mathfrak{L}_{max})$ (with $Q$ being the orthogonal complement of the hyperbolic submodule)
and the DSER elementary orthogonal group $\mathrm{EO}_R(Q, \mathbb{H}(P))$ are conjugate subgroups 
of the general linear group $\mathrm{GL}_{n+2m}(R)$.
\end{seclemma}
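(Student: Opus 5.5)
This is the free case, so I would work entirely with explicit matrices. Fix a basis $e_1,\dots,e_m$ of $P$ with dual basis $e_{-1},\dots,e_{-m}$ of $P^*$, so that $(e_i,e_{-i})$ are mutually orthogonal hyperbolic pairs in $M=Q\perp\mathbb{H}(P)$. Fix a basis $z_1,\dots,z_n$ of $Q$ with Gram matrix $G$ for $B_q$. Following Remark \ref{remark}, I transport everything through $\Phi$ to $N=\mathbb{H}(P)\perp Q$, with pseudo-involution $a\mapsto -a$ and $\mathfrak{L}=\mathfrak{L}_{max}$. In this setting $\mathfrak{L}_{ev}$ is all of $R$, and $(v,a)\in\mathfrak{L}$ exactly when $2a = \pm B(v,v)$, depending on the sign convention. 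The conjugating matrix I will aim for is the permutation matrix of $\Phi$, possibly composed with a diagonal sign change on the $P^*$-coordinates to absorb the factor $\bar{1}^{-1}=-1$. The goal is then to show that each group's generators lie in the conjugate of the other group.

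For the inclusion of ${\rm EO}_R(Q,\mathbb{H}(P))$ in the conjugated ${\rm EU}_{2m}(R,\mathfrak{L}_{max})$, I first decompose. Any $\alpha\in{\rm Hom}(Q,P)$ is a sum of maps $\alpha_{ij}$ sending $z_j\mapsto a e_i$. Since $E_{\alpha+\alpha'}$ equals $E_\alpha E_{\alpha'}$ times a correction, I want the correction to be an elementary transformation of $\mathbb{H}(P)$ alone; the relation $E_\alpha E_{\alpha'}=E_{\alpha+\alpha'}\cdot(\text{unipotent map } P^*\to P)$ should give exactly this. Elementary maps of $\mathbb{H}(P)$ are themselves products of $E$'s, which follows from the usual argument with rank $\ge 2$, or by direct comparison with the short-root transvections $T_{e_i,e_ja}(0)$. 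It therefore suffices to identify $E_{\alpha}$ for rank-one $\alpha=a\,e_i\otimes z_j^\ast$. Computing the matrix, $E_\alpha$ is the identity plus terms $w\mapsto e_i(\cdots)+v B(e_{-i},w)$ with $v=\pm a z$, where $z$ is the vector dual to $z_j^*$. This matches Petrov's $T_{e_{i},v}(c)$ with $c=\pm\tfrac12 B(v,v)$, and the condition $(v,c)\in\mathfrak{L}_{max}$ holds automatically. The maps $E^*_\beta$ correspond in the same way to $T_{e_{-i},v}(c)$.

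For the reverse inclusion I check Petrov's generators one type at a time. The long-root type $T_{e_{\pm i},v}(a)$ with $v\in Q$ is exactly the image just computed, because $a$ is forced by $v$ modulo the kernel of trace. When $v$ has components in $\mathbb{H}(P)$ orthogonal to $e_{\pm i}$, I split $v=v_Q+v_H$ and use Petrov's additivity relation $T_{u,v+w}(a+b+\dots)=T_{u,v}(a)T_{u,w}(b)$. The $v_H$ parts and the short-root generators $T_{e_i,e_ja}(0)$ are elementary transformations of $\mathbb{H}(P)$. I then need to show that these lie in ${\rm EO}_R(Q,\mathbb{H}(P))$, as commutators $[E_\alpha,E^*_\beta]$ with $\alpha,\beta$ of rank one through a common basis vector $z$ of $Q$ (using $n\ge1$).

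I expect the main obstacle to be the bookkeeping of signs and the factor $\tfrac12$ caused by $\bar{1}=-1$ and the trace normalization. This affects both the choice of the diagonal sign matrix in the conjugator and the exact commutator formula $[E_\alpha,E^*_\beta]$, which must produce pure $\mathbb{H}(P)$-elementary matrices. I would settle these first by computing everything explicitly for $m=1$, $n=1$ before generalizing.
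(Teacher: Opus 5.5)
The paper does not prove this lemma. It quotes it from \cite{comp} and only records that the conjugating map is the coordinate permutation $\Phi$ of Remark~\ref{remark}. Your generator-by-generator comparison therefore supplies an argument the paper omits.

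\textbf{Forward inclusion.} The inclusion of ${\rm EO}_R(Q,\mathbb{H}(P))$ into the $\Phi$-conjugate of ${\rm EU}_{2m}(R,\mathfrak{L}_{max})$ goes through as you describe. No sign change is needed. If $\alpha(z)=B_q(u,z)e_i$, then $\alpha^*(f)=f(e_i)u$ and $E_\alpha=T_{e_i,-u}(\tfrac12 B_q(u,u))$ exactly. Similarly $E^*_\beta=T_{e_{-i},-u}(\tfrac12 B_q(u,u))$ for $\beta(z)=B_q(u,z)e_{-i}$. Your correction $E_{\alpha+\alpha'}^{-1}E_\alpha E_{\alpha'}\colon (z,x,f)\mapsto(z,\,x-\tfrac12(\alpha\alpha'^*-\alpha'\alpha^*)f,\,f)$ is a product of generators $T_{e_i,e_ja}(0)$ with $i,j>0$. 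One slip: for $\bar a=-a$ the trace is $2a-B(v,v)$, so $\mathfrak{L}_{ev}=0$, not $R$. Your own description of $\mathfrak{L}_{max}$ already shows this.

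\textbf{Gap in the reverse inclusion.} The gap is where you produce the generators $T_{e_i,e_ja}(0)$. Write $E_\alpha=\exp X_\alpha$ and $E^*_\beta=\exp Y_\beta$, with $X_\alpha(z,x,f)=(-\alpha^*f,\alpha z,0)$ and $Y_\beta(z,x,f)=(-\beta^*x,0,\beta z)$. Take $\alpha(z)=B_q(u,z)e_i$, $\alpha'(z)=B_q(u',z)e_j$ and $\beta(z)=B_q(u',z)e_{-j}$ with $j\neq i$, and put $c=B_q(u,u')$. In each case the bracket commutes with both factors and squares to zero, so the group commutator is $I$ plus the bracket. This gives
\[
[E_\alpha,E^*_\beta]=T_{e_i,e_{-j}c}(0),\qquad [E_\alpha,E_{\alpha'}]=T_{e_i,e_jc}(0).
\]
Likewise $[E^*_{\beta_1},E^*_{\beta_2}]=T_{e_{-i},e_{-j}c}(0)$ for $\beta_1(z)=B_q(u,z)e_{-i}$ and $\beta_2(z)=B_q(u',z)e_{-j}$. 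Two problems follow.

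\begin{enumerate}
\item The commutators $[E_\alpha,E^*_\beta]$ you propose only yield generators whose two indices have opposite signs. All of these preserve $P$ and $P^*$ separately. So the group they generate cannot contain the maps $x\mapsto x+\gamma f$ or $f\mapsto f+\delta x$ with $\gamma,\delta$ alternating, which are exactly the generators with indices of equal sign. You also need $[E_\alpha,E_{\alpha'}]$ and $[E^*_{\beta_1},E^*_{\beta_2}]$, or simply your correction term.
\item If $\alpha$ and $\beta$ are built on the same basis vector of $Q$, then $c$ only runs over multiples of a diagonal entry of $G$ (or of $G^{-1}$). Such an entry need not be a unit unless the basis is orthogonal. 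It is $0$ for $Q=\mathbb{H}(R)$ with its hyperbolic basis, where these commutators are trivial. Take instead $u,u'$ with $B_q(u,u')=1$, which exist because $q$ is non-singular.
\end{enumerate}

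With these two repairs, and $n\ge 1$, your argument is complete.
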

\par The conjugacy is given by the same $\Phi$ as above which in the free case acts as a permutation of the orthogonal basis. That is, we have
$$\text{EU}_{2m}(R,\mathfrak{L}_{max}) =\Phi\text{EO}_R(Q,\mathbb{H}(P))\Phi^{-1}.$$
\par Here we will show that if $P$ and $Q$ are not necessarily free, a one-sided inclusion still holds. 

\begin{mtheorem}{A} \label{A}
    Let $R$ be a commutative Noetherian ring in which $2$ is invertible. Let $P$ be any projective $R$-module of rank $m$ and $(Q,q)$ a quadratic $R$-space of rank $n$. Let $M$,$N$, $\qd$ and $\Phi$ be as given in Remark 3.1. Provided that $(e_+,e_-)$ is a hyperbolic pair in $N$, we have $$\Phi^{-1} \mathrm{EU}_{(e_+, e_-)}(N,\qd) \Phi \subset \mathrm{EO}_R (Q, \mathbb{H}(P)).$$
\end{mtheorem} \label{comp}
\begin{proof} Let $\mx \in {\rm Max}(R)$, the set of maximal ideals of $R$. Then $M_{\mx} \cong N_{\mx} \cong R_{\mx}^{2m+n}.$ Similarly $P_{\mx}[X]$ and $Q_{\mx}[X]$ are free $R_{\mx}[X]$-modules. Identify $(e_+, e_-)$ with $(e_+\otimes 1, e_- \otimes 1)$ in $N \otimes R_{\mx}[X]$.
Since $\Phi_{\mx}$ gives the same conjugacy relation as in Lemma \ref{eq}, we have
$$\text{EU}_{(e_+, e_-)}(N_{\mx}[X],\qd_{\mx}[X]) =\Phi_{\mx}\text{EO}_{R_{\mx}[X]}(Q_{\mx}[X],\mathbb{H}(P_{\mx}[X]))\Phi_{\mx}^{-1}.$$
\par Now, let $T \in {\rm EU}_{(e_+, e_-)}(N,\qd)$. Then we can construct a transformation $\sigma(X) $ in $ {\rm EU}_{(e_+, e_-)}(N[X], \qd[X])$ as following:
\par we can write $T$ as a product, $T = \prod T_{{e_{\pm 1}},v}(a)$ where each $(v,a) \in \mathfrak{L}_{max}$, which means $a = \frac{1}{2} B(v,v)$. So on $(N, \qd)$ we have,
    $$T_{e_{\pm 1},v}(a) : w \mapsto w - e_{\pm 1} B(v,w) - \frac{1}{2} B(v,v) B(e_{\pm},w)+vB(e_{\pm 1},w).$$
    Take $\sigma(X) = \prod T_{e_{\pm 1},Xv}(a)$ where,
    $$T_{e_{\pm 1},Xv}(a) : w \mapsto w - Xe_{\pm 1} B(v,w) - \frac{X^2}{2} B(v,v) B(e_{\pm},w)+XvB(e_{\pm 1},w).$$
    It is obvious that, $\sigma(0) = id_N$, the identity map on $N$ and $\sigma(1) = T$. 
\par Let $\eta(X) = \Phi^{-1} \sigma(X) \Phi \in {\rm O}_{R[X]} (M[X])$. Then clearly, $\eta(0) = id_M$, the identity on $M$ and $\eta(1) = \Phi^{-1} T \Phi$. Therefore, given any $\mx \in {\rm Max}(R)$, we have

$$\eta_{\mx}[X] \in \Phi_{\mx}^{-1}{\rm EU}_{(e_+, e_-)}(N_{\mx}[X],\qd_{\mx}[X])\Phi_{\mx} ={\rm EO}_{R_{\mx}[X]}(Q_{\mx}[X],\mathbb{H}(P_{\mx}[X])).$$

This follows from the fact that $R$ is a commutative Noetherian ring and therefore, given any two finitely generated $R$-modules $A$ and $B$, we get $({\rm Hom}_R(A,B))_{\mx} \cong {\rm Hom}_{R_{\mx}}(A_{\mx},B_{\mx})$.
\par By Lemma \ref{LG}, we have  $\eta(X) \in {\rm EO}_{R[X]} (Q[X], \mathbb{H}(P[X])$, which implies the desired result. That is, $\Phi^{-1}T\Phi \in {\rm EO}_R (Q, \mathbb{H}(P))$.
\end{proof}

\section{Stability of DSER Group} 
\par Let $R$ be a commutative Noetherian ring in which $2$ is invertible. Let $P$ be any projective $R$-module of rank $m$ and $(Q,q)$ a quadratic $R$-space of rank $n$. Let us first define the coset space,
$${\rm KO}_1(Q\!\perp \!\mathbb{H}(P)) = \frac{{\rm O}_R (Q\!\perp \!\mathbb{H}(P))}{{\rm EO}_R(Q, \mathbb{H}(P))}.$$
Now if $m\geq 2$ and $n \geq 1$, the coset space is a group by Lemma \ref{normal}. 
\par Let $e\in P$ be a unimodular element. That is, $P = Re\oplus P_0$, where $P_0$ is a projective module of rank $m-1$. By definition, there is $f \in P^*$ such that $f(e) = 1$ and $P^* = Rf \oplus P_0^*$. The inclusion ${\rm O}_R (Q\perp \mathbb{H}(P_0)) \hookrightarrow {\rm O}_R (Q\!\perp \!\mathbb{H}(P))$ induces a map $\Psi : {\rm KO}_1(Q\perp \mathbb{H}(P_0)) \rightarrow {\rm KO}_1(Q\!\perp \!\mathbb{H}(P))$ such that for $\sigma \in {\rm O}_R(Q\perp \mathbb{H}(P_0))$,
$$\Psi : \sigma{\rm EO}_R(Q, \mathbb{H}(P_0)) \longmapsto \sigma {\rm EO}_R(Q, \mathbb{H}(P)).$$
We want to find out whether and when this map $\Psi$ is surjective, injective or an isomorphism. The following result for when $P$ and $Q$ are free modules was proven in \cite{AAK}. The result assumes a $0$-stable range condition.
\begin{seclemma} \label{isom}
Let $R$ be a commutative ring of $sr(R, 0)= l$ in which $2$ is invertible. 
Then, for all $m \geq l+1$, the canonical map
\[
KO_{1}(Q \perp \mathbb{H}(R)^r) \;\longrightarrow\; KO_{1}(Q \perp \mathbb{H}(R)^m)
\]
is surjective for $l \leq r < m$, and when $m \geq l+2$, the canonical homomorphism
\[
KO_{1}(Q \perp \mathbb{H}(R)^{m-1}) \;\longrightarrow\; KO_{1}(Q \perp \mathbb{H}(R)^m)
\]
is an isomorphism.
\end{seclemma}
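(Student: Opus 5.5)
This is the stability theorem for DSER ${\rm KO}_1$ in the free case, quoted from \cite{AAK}. I would transfer the known stability results for Petrov's odd unitary groups through Lemma \ref{eq}.

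\textbf{Identification.} With $P=R^m$ and $Q$ free, Remark \ref{remark} identifies ${\rm O}_R(Q\perp\mathbb{H}(R)^m)$ with ${\rm U}(N,\qd)$, where $N=\mathbb{H}(R^m)\perp Q$ and $\qd=(B,\mathfrak{L}_{max})$ for the pseudo-involution $a\mapsto -a$. By Lemma \ref{eq}, the same conjugation $\Phi$ carries ${\rm EO}_R(Q,\mathbb{H}(R)^m)$ onto ${\rm EU}_{2m}(R,\mathfrak{L}_{max})$. The stabilization maps on both sides are induced by adjoining a hyperbolic pair, and $\Phi$ is compatible with them because it only permutes basis vectors. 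Hence
\[
{\rm KO}_1(Q\perp\mathbb{H}(R)^m)\cong {\rm U}_{2m}(R,\mathfrak{L}_{max})/{\rm EU}_{2m}(R,\mathfrak{L}_{max}),
\]
naturally in $m$. I would also check that $\mathfrak{L}_{ev}$ for $\mathfrak{L}_{max}$ is $\Lambda_{max}=\{a:\bar a=a\}=\{a:-a=a\}=0$, since $2$ is invertible. So $sr(R,\mathfrak{L}_{ev})=sr(R,0)=l$, which matches the hypothesis.

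\textbf{Surjectivity.} Let $\sigma\in{\rm U}_{2m}$ with $m\ge l+1$, so the Witt index satisfies $ind\ge m\ge sr(R,\mathfrak{L}_{ev})+1$. By Lemma \ref{transitive}, ${\rm EU}_{2m}$ acts transitively on hyperbolic pairs. Hence there is $\tau\in{\rm EU}_{2m}$ with $\tau\sigma(e_{m})=e_m$ and $\tau\sigma(e_{-m})=e_{-m}$. Then $\tau\sigma$ preserves the orthogonal complement $\mathbb{H}(R^{m-1})\perp Q$ and lies in ${\rm U}_{2(m-1)}$, giving surjectivity from level $m-1$. Iterating as long as the current level $m'$ satisfies $m'\ge l+1$ yields surjectivity from every $r$ with $l\le r<m$.

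\textbf{Injectivity.} For $m\ge l+2$, take $\sigma\in{\rm U}_{2(m-1)}$ with $\sigma\in{\rm EU}_{2m}$; I must show $\sigma\in{\rm EU}_{2(m-1)}$. This is the main obstacle. I would invoke the complete stability result of Yu \cite{YuSt}, which gives injectivity for $m-1\ge sr(R,\mathfrak{L}_{ev})+1$, i.e.\ $m\ge l+2$. If an internal argument is wanted instead, I would follow the Bass–Vaserstein pattern. First, write $\sigma$ as a product of elementary transvections. Second, use the stabilizer of the pair $(e_m,e_{-m})$ in ${\rm EU}_{2m}$; under the index bound, it is generated by ${\rm EU}_{2(m-1)}$ together with the transvections $T_{e_{\pm m},v}(a)$ with $v$ orthogonal to the pair. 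Third, note that these transvections normalize the relevant subgroup. The delicate part is this stabilizer computation, which uses the $\Lambda$-stable rank condition once more, at index $m-1\ge l+1$.

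Finally, transporting back through $\Phi$ gives the stated surjectivity and isomorphism for the DSER ${\rm KO}_1$.
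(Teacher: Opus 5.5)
Your proposal is correct, but it does not follow the paper's route, because the paper gives no argument for this lemma: it is quoted from \cite{AAK}, where the free-case stability is proved for the DSER group. You instead derive it from the odd unitary theory. By Remark~\ref{remark} and Lemma~\ref{eq}, $\Phi$ identifies ${\rm KO}_1(Q\perp\mathbb{H}(R)^m)$ with ${\rm U}_{2m}(R,\mathfrak{L}_{max})/{\rm EU}_{2m}(R,\mathfrak{L}_{max})$, compatibly with stabilization. Since $\mathfrak{L}_{ev}=\Lambda_{max}=0$, the hypothesis $sr(R,0)=l$ is exactly the one in Lemma~\ref{transitive} and in \cite{YuSt}.

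Your surjectivity step is the mechanism the paper later uses in the proof of Theorem~\ref{B}, with Lemma~\ref{eq} in place of Theorem~\ref{A}: transitivity on hyperbolic pairs, then splitting off one hyperbolic plane, iterated down to level $r\geq l$. It only needs Lemma~\ref{eq} at levels $m'\geq l+1$. Injectivity is delegated to Yu's theorem, whose range $m-1\geq sr(R,\mathfrak{L}_{ev})+1$ is exactly $m\geq l+2$ here.

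Citing \cite{AAK} keeps the result independent of the comparison theorem of \cite{comp}. Your route makes the free DSER statement formal once Lemma~\ref{eq} is available, and it explains the paper's remark that the $\Lambda$-stability conditions of \cite{AAK}, \cite{Petrov} and \cite{YuSt} agree. The cost is that all of the injectivity content sits in the citation of \cite{YuSt}.

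The optional internal argument you sketch for injectivity is not right as stated, although your main line does not depend on it. For $v\neq 0$, the transvection $T_{e_m,v}(a)$ sends $e_{-m}$ to $e_{-m}+e_m\bar{1}^{-1}a+v$, so it does not stabilize the pair $(e_m,e_{-m})$. Moreover, ${\rm U}_{2(m-1)}\cap{\rm EU}_{2m}$ is exactly the stabilizer of that pair in ${\rm EU}_{2m}$. So asserting that this stabilizer is generated by ${\rm EU}_{2(m-1)}$ is just the injectivity claim itself.

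The Bass--Vaserstein pattern works instead with the stabilizer of the single vector $e_m$. One shows, using the stable range hypothesis, that this stabilizer equals ${\rm EU}_{2(m-1)}\cdot\{T_{e_m,v}(a)\}$. Here the first factor normalizes the second, via $\epsilon\, T_{e_m,v}(a)\,\epsilon^{-1}=T_{e_m,\epsilon v}(a)$; in your sketch the direction is reversed. An element of this product that also fixes $e_{-m}$ must have trivial second factor, and hence lies in ${\rm EU}_{2(m-1)}$.
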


We want to generalize the above result for finitely generated projective modules. 
\begin{note}[X] \label{X}
    For any inclusion of groups $G < H$ such that $C \unlhd G $, $D \unlhd H$ and $C<D$; we can show by basic group theory that the canonical map $G/C \longrightarrow H/D$ is injective if and only if $C = G \cap D$. Additionally, the above map is surjective if and only if $H = DG$.
    \par So to prove surjectivity and injectivity of $\Psi$, it is enough to show that $${\rm O}_R(Q, \mathbb{H}(P)) = {\rm EO}_R(Q, \mathbb{H}(P)) {\rm O}_R(Q, \mathbb{H}(P_0)) \text{, and}$$ $${\rm EO}_R(Q, \mathbb{H}(P_0)) = {\rm EO}_R(Q, \mathbb{H}(P))\cap {\rm O}_R(Q, \mathbb{H}(P_0)) \text{ respectively.}$$
\end{note}

    We will prove a sufficient condition for surjectivity of the map $\Psi$. This will begin by recalling $M$, $N$, $\qd$ and $\Phi$ in Remark \ref{remark}. Since the odd form parameter on $N$ is $\mathfrak{L} = \mathfrak{L}_{max} = \{(v, \frac{1}{2}B(v,v):v \in N\}$, we have $\mathfrak{L}_{ev} = 0$. This will allow us to use Lemma \ref{transitive} on $N$. The goal is to transfer the transitive action of ${\rm EU}(N, \qd)$ to ${\rm EO}(Q \perp \mathbb{H}(P))$, using Theorem \ref{A}.

\begin{mtheorem}{B} \label{B}
    Let $R$ be a commutative Noetherian ring where $2$ is invertible. Let $P$ be any projective $R$-module of rank $m \geq 2$ and $(Q,q)$ a quadratic $R$-space of rank $n \geq 1$. Let $(e,f)$ be a hyperbolic pair such that $P = Re \oplus P_0$ and $P^* = Rf \oplus P_0^*$. Denoting, $M = Q \perp \mathbb{H}(P)$, let $(M, \qd)$ be as Remark \ref{remark}. Further, assume the Witt index, $ind (M, \qd) = j$ and $sr(R, 0) = l$.
    \par If $j \geq l+1$, the map $\Psi : {\rm KO}_1(Q\perp \mathbb{H}(P_0)) \longrightarrow {\rm KO}_1(Q\!\perp \!\mathbb{H}(P))$ is surjective.
\end{mtheorem}
\begin{proof}
    We will prove the theorem in two steps\\
    \textbf{Step 1:} Let $N$ and $\Phi$ be as in Remark \ref{remark}. Then $(\Phi e, \Phi f)$ is a hyperbolic pair in $N$. Let $(u,v)$ be another hyperbolic pair in $M$. Since $j = ind(N,\qd) \geq l$, by Lemma \ref{transitive}, there exist a $\sigma \in {\rm EU}_{(\Phi e, \Phi f)}(N, \qd)$ such that $\sigma(\Phi e) = \Phi u$ and $\sigma(\Phi f) = \Phi v$.
    Clearly then, $$\Phi^{-1} \sigma \Phi e=u \text{ and } \Phi^{-1} \sigma \Phi f = v.$$
    By Theorem \ref{A}, we know $\Phi^{-1} \sigma \Phi \in {\rm EO}_R(Q, \mathbb{H}(P))$. Therefore, ${\rm EO}_R(Q, \mathbb{H}(P))$ acts transitively on the hyperbolic pairs of $M$.\\
    \textbf{Step 2:} Now, let $\gamma \in {\rm O}_R(Q\!\perp \!\mathbb{H}(P))$. Then $B(\gamma e,\gamma f) = B(e,f) = 1$, i.e. $(\gamma e,\gamma f)$ is a hyperbolic pair. By \textbf{Step 1}, we have a transformation $\eta \in {\rm EO}_R(Q, \mathbb{H}(P))$ such that, $\eta \gamma e = e$ and $\eta \gamma f = f$. Therefore, $\eta \gamma$ fixes $e$ and $f$. That is, $\eta \gamma \in {\rm O}_R(Q, \mathbb{H}(P_0))$. So, finally we have
    $$\gamma = \eta^{-1} (\eta \gamma) \in {\rm EO}_R(Q, \mathbb{H}(P)){\rm O}_R(Q, \mathbb{H}(P_0)).$$
    Following the observation in Note \ref{X}, this concludes the proof.
\end{proof}

\begin{remark}
    We have found a sufficient condition for surjective stability of the orthogonal ${\rm K}_1$-functor. Essentially the result above shows that if a projective module contains a sufficiently large free submodule, surjective ${\rm K}_1$-stability of DSER elementary orthogonal group over it is preserved. However, we are yet to establish an equivalent generalization of the injective stability found on free modules. In this effort, applying the Local-Global principle requires a homotopy of DSER transformations strictly lying inside ${\rm O}_{R[X]}(Q[X]\perp \mathbb{H}(P_0[X])$. The main obstruction is proving the existence of such a homotopy.
\end{remark}

\section{Acknowledgement} Both author thanks the support received from the ANRF SURE Scheme (SUR/ 2022/ 004894), Government of India. In addition, the second author thanks Dr VK Aparna Pradeep for her important suggestions.

\end{document}